\documentclass[11pt]{amsart}

\usepackage[margin=1in]{geometry}
\usepackage[T1]{fontenc}
\usepackage{amsmath,amssymb,amsthm,url,enumitem}
\usepackage{tikz}
\usetikzlibrary{arrows.meta}

\newtheorem{theorem}{Theorem}
\newtheorem{corollary}[theorem]{Corollary}

\newtheorem{proposition}[theorem]{Proposition}
\newtheorem{remark}[theorem]{Remark}

\newcommand{\N}{\mathbb{N}}

\title{Forbidden subgraphs in divisor graphs and an Erd\H{o}s divisibility problem}
\author{Damek Davis}
\thanks{Research supported by NSF DMS award 2523384.}
\address{Department of Statistics and Data Science, The Wharton School, University of Pennsylvania}
\email{damek@wharton.upenn.edu}
\date{}

\begin{document}

\begin{abstract}
Erd\H{o}s asked for the largest size $f(n)$ of a subset of $\{1,\dots,n\}$ with no element dividing two others, and whether $\lim f(n)/n$ is irrational. We show that $f(n)=c_2\,n+o(n)$ for an effectively computable constant~$c_2$, and moreover that the number $q(n)$ of such subsets satisfies $q(n)=\beta_2^{n+o(n)}$ for a computable constant~$\beta_2$. To prove this, we recast the divisibility constraint as forbidding a certain subgraph in the directed divisor graph on $\{1,\dots,n\}$ and prove a more general result: for any finite family of connected forbidden subgraphs of the divisor graph, both the extremal density and counting rate are effectively computable. The proof uses a theorem of McNew on local statistics of divisor graphs. The irrationality of~$c_2$ remains open.
\end{abstract}

\maketitle

\section{Introduction}

Let \(f(n)\) denote the largest size of a set \(A\subseteq \{1,\dots,n\}\) such that there do not exist distinct \(a,b,c\in A\) with \(a\mid b\) and \(a\mid c\). A question of Erd\H{o}s (see Guy~\cite[Problem~B24]{Gu04} and Bloom~\cite[Problem 1062]{BloomEP}) asks how large \(f(n)\) can be and whether \(\lim_{n\to\infty} f(n)/n\) is irrational. The interval \(\{\lfloor n/3\rfloor+1,\dots,n\}\) already shows that \(f(n)\ge \lceil 2n/3\rceil\). Lebensold~\cite{Le76} proved that for sufficiently large \(n\),
\[
0.6725\,n\le f(n)\le 0.6736\,n,
\]
and OEIS~A038372 tabulates the exact extremal values for small \(n\)~\cite{OEISA038372}.

In this note we prove that for every \(\varepsilon > 0\),
\[
f(n)=c_2\,n+O_{\varepsilon}\!\left(n\exp\!\bigl(-(1-\varepsilon)\sqrt{\log n\,\log\log n}\bigr)\right)
\]
for an effectively computable constant \(c_2\), and the number of such subsets has exponential growth rate~\(\beta_2\) for a computable constant~\(\beta_2\ge 1\) (Corollary~\ref{cor:erdos}). The key is to recast the problem in the \emph{divisor graph} on \(\{1,\dots,n\}\), whose vertices are joined whenever one divides the other; divisibility gives each edge a natural orientation from divisor to multiple, written \(u\to v\) when \(u\mid v\). The Erd\H{o}s condition---no element divides two others---is then equivalent to forbidding the directed \emph{two-fork} \(x\to y\), \(x\to z\). This makes the problem an instance of a \emph{vertex Tur\'an-type problem} (e.g.,~\cite{JoTa10}): given a host graph~\(H\) and a forbidden subgraph~\(F\), find the largest vertex subset \(U\subseteq V(H)\) whose induced subgraph contains no copy of~\(F\). (The classical theorem of Tur\'an~\cite{Tu41} is the edge-maximization analogue.)

Our main result (Theorem~\ref{thm:general}) shows that for any finite family of connected forbidden subgraphs of the divisor graph, the largest avoiding subset of \(\{1,\dots,n\}\) has size \(c\,n+o(n)\) for an effectively computable constant~\(c\) (Corollary~\ref{cor:patterns}). In addition, the number of avoiding subsets has exponential growth rate~\(\beta\) for an effectively computable constant~\(\beta\ge 1\). The forbidden subgraphs may be directed or undirected; for the Erd\H{o}s problem the orientation is essential, since the undirected two-fork would also forbid the reverse pattern---no element is a common multiple of two others---which is a different condition. The Erd\H{o}s two-fork is one instance (Corollary~\ref{cor:erdos}); further applications to \(r\)-fork avoidance and \(k\)-chain avoidance are given in Remark~\ref{par:applications}.

The main tool is a theorem of McNew~\cite{McNew} on local statistics of divisor graphs, which expresses certain additive and multiplicative divisor-graph statistics as explicit convergent series. Theorem~\ref{thm:general} identifies a broad class of properties---those that are downward closed, decompose over coprime components, and are preserved by integer dilation---for which we use McNew's theorem to obtain both computable density and counting estimates. For the Erd\H{o}s two-fork case, the numerical estimates in Section~\ref{sec:numerical} improve Lebensold's lower bound to \(c_2\ge 0.6729\ldots\), narrowing the gap with his upper bound \(c_2\le 0.6736\) to approximately \(0.0007\). We also estimate \(1.729\ldots \le \beta_2 \le 1.874\ldots\).

McNew~\cite{McNew} also uses his theorem to study other problems on the divisor graph. The most-studied case is the two-chain \(a\mid b\): sets avoiding it are called \emph{primitive}, and their extremal density is \(1/2\). Determining the counting rate is more involved: Angelo~\cite{Angelo18} proved that the number of primitive subsets of \(\{1,\dots,n\}\) has an exponential growth rate, and McNew~\cite[Theorem~2]{McNew} obtained this rate with an explicit error term.

\section{Main results}\label{sec:general}

This section states the general theorem (Theorem~\ref{thm:general}), which gives both an extremal density and a counting result, derives its forbidden-subgraph corollary (Corollary~\ref{cor:patterns}), and specializes to the Erd\H{o}s problem (Corollary~\ref{cor:erdos}). We begin with the notation needed for the constants that appear in these results.

For any family \(\mathcal{P}\) of finite subsets of \(\N\) and any finite set \(T\subset\N\), write
\[
\Phi_{\mathcal{P}}(T):=\max\{|B|:B\subseteq T,\;B\in\mathcal{P}\},
\qquad
Q_{\mathcal{P}}(T):=\#\{B\subseteq T:B\in\mathcal{P}\}.
\]
Let \(P^+(d)\) denote the largest prime factor of \(d\), with \(P^+(1)=1\). For any function \(\Psi\) defined on finite subsets of \(\N\) for which the following series converges absolutely, define
\[
C_{\Psi}:=
\sum_{i=1}^{\infty}
\left(\prod_{p\le i}\frac{p-1}{p}\right)
\sum_{\substack{d\ge 1\\ P^+(d)\le i}}
\sum_{t\in[id,(i+1)d)}
\frac{\Psi(\{d,\dots,t\})-\Psi(\{d+1,\dots,t\})}{t(t+1)}.
\]
The coefficient weights in this series sum to~\(1\) (as we later see in~\eqref{eq:weight}), so \(C_\Psi\) converges absolutely whenever the differences \(\Psi(\{d,\dots,t\})-\Psi(\{d+1,\dots,t\})\) are uniformly bounded. In particular, \(C_{\Phi_{\mathcal{P}}}\) and \(C_{\log Q_{\mathcal{P}}}\) are well-defined for any family \(\mathcal{P}\) satisfying the hypotheses of the theorem below.

\begin{theorem}\label{thm:general}
Let \(\mathcal{P}\) be a family of finite subsets of \(\N\), called admissible sets, and assume \(\emptyset\in\mathcal{P}\). Assume further:
\begin{enumerate}
\item (downward closed) if \(B\in \mathcal{P}\) and \(B'\subseteq B\), then \(B'\in \mathcal{P}\);
\item (decomposes over coprime components) if \(T=T_1\sqcup T_2\) is finite and there is no divisibility relation between an element of \(T_1\) and an element of \(T_2\), then for every \(B\subseteq T\) we have \(B\in \mathcal{P}\) if and only if \(B\cap T_i\in \mathcal{P}\) for \(i=1,2\);
\item (preserved by integer dilation) if \(m\in \N\) and \(B\) is finite, then \(B\in \mathcal{P}\) if and only if \(mB\in \mathcal{P}\), where \(mB:=\{mb:\ b\in B\}\).
\end{enumerate}
Write \(f_{\mathcal{P}}(n):=\Phi_{\mathcal{P}}(\{1,\dots,n\})\), \(q_{\mathcal{P}}(n):=Q_{\mathcal{P}}(\{1,\dots,n\})\), \(c_{\mathcal{P}}:=C_{\Phi_{\mathcal{P}}}\), and \(\beta_{\mathcal{P}}:=\exp(C_{\log Q_{\mathcal{P}}})\).
Then for every \(\varepsilon>0\):
\begin{enumerate}[label=\textup{(\alph*)}]
\item \textup{(Extremal density.)}
\[
f_{\mathcal{P}}(n)=c_{\mathcal{P}}\,n+O_{\varepsilon}\!\left(
n\exp\!\bigl(-(1-\varepsilon)\sqrt{\log n\,\log\log n}\bigr)
\right).
\]

\item \textup{(Counting.)}
\[
\log q_{\mathcal{P}}(n)=n\log\beta_{\mathcal{P}}+O_{\varepsilon}\!\left(
n\exp\!\bigl(-(1-\varepsilon)\sqrt{\log n\,\log\log n}\bigr)
\right).
\]
In particular, \(\lim_{n\to\infty}q_{\mathcal{P}}(n)^{1/n}=\beta_{\mathcal{P}}\).
\end{enumerate}
If in addition membership in \(\mathcal{P}\) is decidable for finite sets, then \(c_{\mathcal{P}}\) and \(\beta_{\mathcal{P}}\) are effectively computable.
\end{theorem}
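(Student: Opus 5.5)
The plan is to reduce both statements to a statistic on the connected components of a sparsified divisor graph, to which McNew's theorem applies. The relevant notion is the following. Let \(\Psi\) be \(\Phi_{\mathcal{P}}\) or \(\log Q_{\mathcal{P}}\), and let \(T\subseteq\{1,\dots,n\}\) be a union of parts \(T_1,\dots,T_k\) with no divisibility relations between different parts. Then hypothesis (2), applied inductively, gives
\[
\Phi_{\mathcal{P}}(T)=\sum_j\Phi_{\mathcal{P}}(T_j),\qquad Q_{\mathcal{P}}(T)=\prod_j Q_{\mathcal{P}}(T_j),
\]
so \(\Psi\) is additive over such decompositions. By hypothesis (3), \(\Psi(mS)=\Psi(S)\) for every \(m\). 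Hence \(\Psi\) is a function of the divisibility pattern of each component up to dilation, which is exactly the setting of an additive divisor-graph statistic.

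First I would show that \(\Psi\) changes by a bounded amount when a single vertex is added or deleted. For \(\Phi\), downward closure (1) gives \(0\le\Phi(T\cup\{x\})-\Phi(T)\le1\). For \(\log Q\), every admissible subset of \(T\cup\{x\}\) either omits \(x\) or becomes an admissible subset of \(T\) after deleting \(x\), so \(Q(T)\le Q(T\cup\{x\})\le 2Q(T)\) and the increment lies in \([0,\log 2]\). This Lipschitz property justifies the absolute convergence of \(C_\Psi\) asserted before the theorem. It also lets us discard any set of \(o(n)\) vertices at cost \(o(n)\).

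Next I would invoke McNew's structural decomposition. Remove a sparse exceptional set of integers \(\le n\), namely those lacking a suitable smooth/rough factorization, at the scale \(y=\exp(\sqrt{\log n\log\log n})\). The number removed is \(O(n\exp(-(1-\varepsilon)\sqrt{\log n\log\log n}))\). The divisor graph on the remaining integers then splits into components of the form \(d\cdot\{\text{consecutive block } \{1,\dots,i\}\text{ scaled}\}\) times a large rough cofactor. More concretely, the components are dilates \(m\cdot S\) of sets of the shape \(\{d,\dots,t\}\) restricted to \(P^+\)-smooth parts. McNew's theorem then expresses \(\sum_{\text{components}}\Psi\) as \(n\cdot C_\Psi\) plus the stated error. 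The telescoping differences \(\Psi(\{d,\dots,t\})-\Psi(\{d+1,\dots,t\})\), weighted by \(\frac{1}{t(t+1)}\prod_{p\le i}(1-1/p)\), count the density of integers whose component's "leading element" is at \(d\) with top at \(t\). Combining the additivity identity, the dilation invariance, and the Lipschitz bound proves (a) for \(\Phi\) and (b) for \(\log Q\). The limit \(q(n)^{1/n}\to\beta\) is immediate from (b). I expect the main obstacle to be exactly this matching step: verifying that our hypotheses (1)–(3) are precisely what McNew's additive-statistic theorem requires, in particular that components are determined up to dilation so that \(\Psi\) depends only on the shifted interval pattern. I would check this first against McNew's proof for primitive sets, where \(\Psi\) is the independence number and the count of independent sets.

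For effective computability, assume membership in \(\mathcal{P}\) is decidable. Then each term \(\Psi(\{d,\dots,t\})\) is computable by brute force over subsets. The tail of the series is bounded by the tail of the weights, times the uniform bound \(1\) or \(\log 2\) on the differences. Those weights sum to \(1\) with an explicitly computable tail, so truncating yields \(C_\Psi\) to any prescribed accuracy.
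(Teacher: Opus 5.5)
There is a genuine gap at the central step. You misdescribe what McNew's theorem computes, so your reduction never connects to it. McNew's theorem does not evaluate $\sum\Psi$ over the components of a single (sparsified) divisor graph on $\{1,\dots,n\}$. It evaluates $\sum_{a=1}^n h(a,n)$ for a bounded $h$ that depends only on the rooted component of $a$ in $D[a,n]$, the divisor graph on $\{a,\dots,n\}$, which is a different graph for each $a$.

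Your component picture also fails on its own terms. Suppose you delete $\{1,\dots,\lfloor n/y\rfloor\}$ with your $y$. Then for every $m\in(n/y,n]$ all of whose prime factors exceed $y$, the set $m\cdot\{1,\dots,\lfloor n/m\rfloor\}$ is an entire component. It is a dilate of the original problem at scale $n/m$. Summing $\Psi$ over components therefore just returns values $\Psi(\{1,\dots,t\})$ at smaller $t$, which is a recursion, not the series $C_\Psi$. The differences $\Psi(\{d,\dots,t\})-\Psi(\{d+1,\dots,t\})$ enter your argument only as a heuristic reading of the weights, never as terms of an actual identity.

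The missing idea is to telescope in the lower endpoint:
\[
\Psi(\{1,\dots,n\})=\sum_{a=1}^n g(a,n),\qquad g(a,n):=\Psi(\{a,\dots,n\})-\Psi(\{a+1,\dots,n\}),
\]
with $\Psi(\emptyset)=0$. Let $C(a,n)$ be the component of $a$ in $D[a,n]$ and $R(a,n)$ the rest. Apply your additivity (axiom (2)) to $\{a,\dots,n\}=C(a,n)\sqcup R(a,n)$ and to $\{a+1,\dots,n\}=(C(a,n)\setminus\{a\})\sqcup R(a,n)$. The $\Psi(R(a,n))$ terms cancel, giving $g(a,n)=\Psi(C(a,n))-\Psi(C(a,n)\setminus\{a\})$, a function of the rooted component alone. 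Axiom (3), applied to both sides of $u\,C(a,n)=v\,C(b,m)$, gives invariance under linear scaling isomorphism. Your Lipschitz bound gives $M=1$ or $M=\log 2$.

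McNew's theorem then applies directly, and $c_h=C_\Psi$ is read off by comparing formulas. No exceptional set needs to be handled on your side; that lives inside McNew's proof. This is precisely the \emph{matching step} you flagged as the main obstacle and left open. Your Lipschitz bounds and your computability argument via nonnegative terms and total weight $1$ are otherwise fine.
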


\begin{remark}[Partition function]\label{rem:partition}\rm
The same argument shows that for any fixed \(z>0\), the partition function \(Z_{\mathcal{P}}(T,z):=\sum_{B\subseteq T,\,B\in\mathcal{P}} z^{|B|}\) satisfies
\[
\log Z_{\mathcal{P}}(\{1,\dots,n\},z)
=\kappa_{\mathcal{P}}(\log z)\,n
+O_{\varepsilon}\!\left(
\log(1+z)\,n\exp\!\bigl(-(1-\varepsilon)\sqrt{\log n\,\log\log n}\bigr)
\right)
\]
for an effectively computable constant \(\kappa_{\mathcal{P}}(\log z)\). The case \(z=1\) is part~(b) of the theorem. It is straightforward to show that the ``pressure" \(t\mapsto\kappa_{\mathcal{P}}(t)\) is convex and \(1\)-Lipschitz~\cite{CGT08}, and \(\kappa_{\mathcal{P}}(t)/t\to c_{\mathcal{P}}\) as \(t\to\infty\).
\end{remark}

We now describe the family of admissible sets to which we apply Theorem~\ref{thm:general}. The \emph{divisor graph} of a finite set \(S\subset\N\) has vertex set \(S\) with an edge between \(u\) and \(v\) whenever \(u\ne v\) and one divides the other; we orient each edge from divisor to multiple, writing \(u\to v\) when \(u\mid v\). Given a collection \(\mathcal{F}\) of connected graphs, each directed or undirected---here connected refers to the underlying undirected graph, regardless of orientation---say that a finite set \(S\) is \emph{\(\mathcal{F}\)-free} if its divisor graph contains no copy of any member of~\(\mathcal{F}\): containment of a directed member means containment as a directed subgraph of the oriented divisor graph, while containment of an undirected member means containment as a subgraph of the underlying undirected graph. Write \(\mathcal{P}(\mathcal{F})\) for the family of all \(\mathcal{F}\)-free sets. We do not require the copy to be induced, so extra divisibility arrows among the chosen vertices are allowed.\footnote{If one instead requires induced containment, the three axioms still hold and Theorem~\ref{thm:general} applies equally. Non-induced avoidance is more restrictive and is the natural choice for the Erd\H{o}s problem, since ``no element divides two others'' does not depend on the relationship between those two others.} For example, \(\{1,2,4\}\) contains the two-fork \(x\to y\), \(x\to z\): the arrows \(1\to 2\) and \(1\to 4\) realize it, while the extra arrow \(2\to 4\) is harmless. Figure~\ref{fig:noninduced} illustrates this distinction, and Figure~\ref{fig:patterns} shows four representative connected forbidden subgraphs. The family \(\mathcal{P}(\mathcal{F})\) satisfies the three axioms of Theorem~\ref{thm:general} (proof in Section~\ref{sec:patterns}), giving the following corollary.

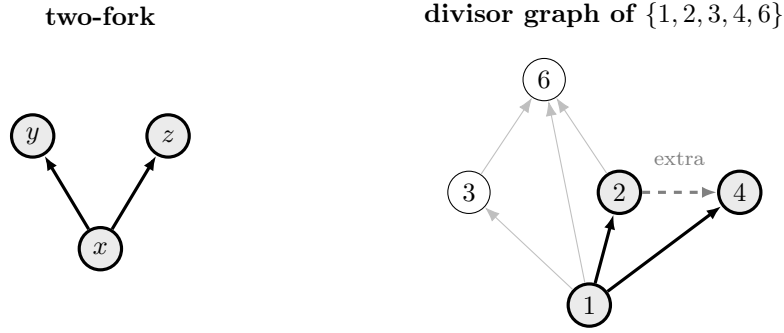
\begin{figure}[t]
\centering
\begin{tikzpicture}[
    vertex/.style={circle,draw,inner sep=1.2pt,minimum size=16pt,font=\small},
    matched/.style={circle,draw,line width=1.2pt,fill=black!8,inner sep=1.2pt,minimum size=16pt,font=\small},
    bg/.style={-{Latex[length=2mm]},thin,black!25},
    pat/.style={-{Latex[length=2mm]},line width=1.2pt},
    extra/.style={-{Latex[length=2mm]},line width=1.2pt,dashed,black!50}
]
%%% Left: two-fork pattern (vertically centered with right graph)
\node[font=\small\bfseries] at (-2.5,3.85) {two-fork};
\begin{scope}[shift={(-2.5,0.75)}]
\node[matched] (px) at (0,0) {$x$};
\node[matched] (py) at (-0.9,1.5) {$y$};
\node[matched] (pz) at (0.9,1.5) {$z$};
\draw[pat] (px) -- (py);
\draw[pat] (px) -- (pz);
\end{scope}

%%% Right: divisor graph of {1,2,3,4,6}
\begin{scope}[shift={(4.0,0)}]
\node[font=\small\bfseries] at (0.2,3.85) {divisor graph of $\{1,2,3,4,6\}$};
% level 0: root
\node[matched] (v1) at (0,0) {$1$};
% level 1: primes/prime-powers
\node[vertex]  (v3) at (-1.6,1.5) {$3$};
\node[matched] (v2) at (0.4,1.5) {$2$};
\node[matched] (v4) at (2.0,1.5) {$4$};
% level 2: composite — centered between its divisors 2 and 3
\node[vertex]  (v6) at (-0.6,3.0) {$6$};
% background divisibility arrows (faded)
\draw[bg] (v1) -- (v3);
\draw[bg] (v1) -- (v6);
\draw[bg] (v3) -- (v6);
\draw[bg] (v2) -- (v6);
% pattern-matching arrows (bold): 1->2 and 1->4
\draw[pat] (v1) -- (v2);
\draw[pat] (v1) -- (v4);
% extra divisibility among matched vertices (dashed): 2|4
\draw[extra] (v2) -- (v4);
\node[font=\scriptsize,black!50] at (1.2,1.95) {extra};
\end{scope}
\end{tikzpicture}
\caption{Non-induced subgraph containment. \textbf{Left:} the two-fork. \textbf{Right:} the divisor graph of $\{1,2,3,4,6\}$; the shaded vertices $\{1,2,4\}$ realize the two-fork via the bold arrows $1\to 2$ and $1\to 4$. The dashed arrow $2\to 4$ is an extra divisibility among the matched vertices; the copy need not be induced.}
\label{fig:noninduced}
\end{figure}

\begin{figure}[t]
\centering
\begin{tikzpicture}[
    nd/.style={circle,draw,line width=1.2pt,fill=black!8,inner sep=1.2pt,minimum size=16pt,font=\small},
    arr/.style={-{Latex[length=2mm]},line width=1.2pt}
]
% two-fork
\begin{scope}[shift={(0,0)}]
\node[font=\small\bfseries] at (0,2.4) {two-fork};
\node[nd] (a1) at (0,0) {$x$};
\node[nd] (b1) at (-0.9,1.5) {$y$};
\node[nd] (c1) at (0.9,1.5) {$z$};
\draw[arr] (a1) -- (b1);
\draw[arr] (a1) -- (c1);
\end{scope}

% three-fork
\begin{scope}[shift={(5,0)}]
\node[font=\small\bfseries] at (0,2.4) {three-fork};
\node[nd] (a2) at (0,0) {$x$};
\node[nd] (b2) at (-1.1,1.5) {$y$};
\node[nd] (c2) at (0,1.5) {$z$};
\node[nd] (d2) at (1.1,1.5) {$w$};
\draw[arr] (a2) -- (b2);
\draw[arr] (a2) -- (c2);
\draw[arr] (a2) -- (d2);
\end{scope}

% chain
\begin{scope}[shift={(0,-3.5)}]
\node[font=\small\bfseries] at (0,2.4) {chain};
\node[nd] (a3) at (-2.0,1.0) {$x_0$};
\node[nd] (b3) at (-0.65,1.0) {$x_1$};
\node[nd] (c3) at (0.65,1.0) {$x_2$};
\node[nd] (d3) at (2.0,1.0) {$x_3$};
\draw[arr] (a3) -- (b3);
\draw[arr] (b3) -- (c3);
\draw[arr] (c3) -- (d3);
\end{scope}

% in-fork
\begin{scope}[shift={(5,-3.5)}]
\node[font=\small\bfseries] at (0,2.4) {in-fork};
\node[nd] (a4) at (0,1.5) {$x$};
\node[nd] (b4) at (-0.9,0) {$y$};
\node[nd] (c4) at (0.9,0) {$z$};
\draw[arr] (b4) -- (a4);
\draw[arr] (c4) -- (a4);
\end{scope}
\end{tikzpicture}
\caption{Representative connected forbidden subgraphs of divisor graphs. Arrows point from divisor to multiple.}
\label{fig:patterns}
\end{figure}
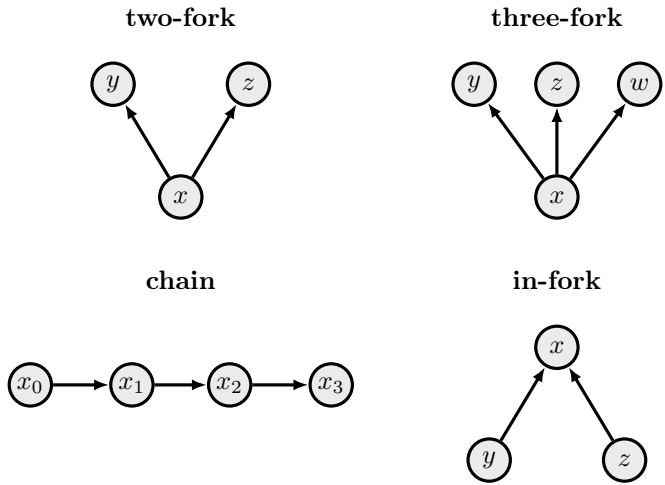

\begin{corollary}\label{cor:patterns}
Let \(\mathcal{F}\) be a finite family of connected\footnote{Connectedness is essential: a disconnected forbidden subgraph can straddle two connected components of the divisor graph, so the componentwise axiom may fail.} forbidden subgraphs (directed or undirected) of divisor graphs, and let \(f_{\mathcal{F}}(n)\) and \(q_{\mathcal{F}}(n)\) denote the maximum size, and the number, of \(\mathcal{F}\)-free subsets of \(\{1,\dots,n\}\). Applying Theorem~\ref{thm:general} to \(\mathcal{P}(\mathcal{F})\), there exist effectively computable constants \(c_{\mathcal{F}}\) and \(\beta_{\mathcal{F}}\ge 1\) such that, for every \(\varepsilon>0\),
\begin{align*}
f_{\mathcal{F}}(n)&=c_{\mathcal{F}}\,n+O_{\varepsilon}\!\bigl(n\exp\!\bigl(-(1-\varepsilon)\sqrt{\log n\,\log\log n}\bigr)\bigr),\\
\log q_{\mathcal{F}}(n)&=n\log\beta_{\mathcal{F}}+O_{\varepsilon}\!\bigl(n\exp\!\bigl(-(1-\varepsilon)\sqrt{\log n\,\log\log n}\bigr)\bigr).
\end{align*}
\end{corollary}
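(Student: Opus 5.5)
The plan is to verify that \(\mathcal{P}(\mathcal{F})\) satisfies the hypotheses of Theorem~\ref{thm:general}, including decidability of membership, and then read off both estimates from parts~(a) and~(b) of that theorem. The bound \(\beta_{\mathcal{F}}\ge 1\) will come separately from \(q_{\mathcal{F}}(n)\ge 1\).

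\textbf{Easy hypotheses.} First, \(\emptyset\in\mathcal{P}(\mathcal{F})\), since the empty set has no copies of anything; here we use that members of \(\mathcal{F}\) have at least one vertex. Downward closure holds because the divisor graph of \(B'\subseteq B\) is the induced subgraph of that of \(B\) on \(B'\), with the same orientations. So a copy of a member \(F\in\mathcal{F}\) inside \(B'\) is also a copy inside \(B\).

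\textbf{Dilation.} For \(m\in\N\), the map \(b\mapsto mb\) is a bijection \(B\to mB\). Since \(u\mid v\iff mu\mid mv\), it is an isomorphism of oriented divisor graphs. Hence it is also an isomorphism of the underlying undirected graphs. Copies of directed or undirected members therefore correspond exactly, so \(B\) is \(\mathcal{F}\)-free if and only if \(mB\) is.

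\textbf{Coprime components.} This is the only step that uses connectedness, and it is the main point to check. Let \(T=T_1\sqcup T_2\) with no divisibility relation across the parts, and let \(B\subseteq T\).
\begin{itemize}
\item If \(B\in\mathcal{P}(\mathcal{F})\), then \(B\cap T_i\in\mathcal{P}(\mathcal{F})\) by downward closure.
\item Conversely, suppose \(B\) contains a copy of some \(F\in\mathcal{F}\), realized by an injective map \(\phi\) of vertices. Each edge of \(F\) maps to a divisibility pair in \(B\), and no such pair crosses between \(T_1\) and \(T_2\). So the image of each edge lies entirely in \(T_1\) or entirely in \(T_2\).
\item The underlying graph of \(F\) is connected, so propagating along paths puts the whole image \(\phi(V(F))\) in a single \(T_i\). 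This needs the convention that a one-vertex \(F\) is trivially contained in one part.
\item Then \(B\cap T_i\) contains the copy, so \(B\cap T_i\notin\mathcal{P}(\mathcal{F})\).
\end{itemize}

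\textbf{Decidability.} Membership is decidable because \(\mathcal{F}\) is finite and each member is a finite graph. For a finite \(B\) one builds the oriented divisor graph and checks every injective map from \(V(F)\) into \(B\), for every \(F\in\mathcal{F}\).

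\textbf{Conclusion.} Theorem~\ref{thm:general} now gives both asymptotics. It also gives effective computability of \(c_{\mathcal{F}}:=c_{\mathcal{P}(\mathcal{F})}\) and \(\beta_{\mathcal{F}}:=\beta_{\mathcal{P}(\mathcal{F})}\). Finally, \(\emptyset\) is always \(\mathcal{F}\)-free, so \(q_{\mathcal{F}}(n)\ge1\). The limit statement \(\lim q_{\mathcal{F}}(n)^{1/n}=\beta_{\mathcal{F}}\) then yields \(\beta_{\mathcal{F}}\ge1\).
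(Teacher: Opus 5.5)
Your proposal is correct and follows essentially the same route as the paper: it verifies the three axioms for $\mathcal{P}(\mathcal{F})$, uses connectedness only for the componentwise axiom, and gets decidability from the finiteness of $\mathcal{F}$ before invoking Theorem~\ref{thm:general}. Your explicit derivation of $\beta_{\mathcal{F}}\ge 1$ from $q_{\mathcal{F}}(n)\ge 1$ is a small addition that the paper leaves implicit; it also follows from the nonnegativity of the increments $h_{\mathcal{P}}$.
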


Corollary~\ref{cor:patterns} is not the only source of examples for Theorem~\ref{thm:general}. The collection \(\mathcal{F}\) may also be infinite, or may mix directed and undirected patterns; the only requirement for Theorem~\ref{thm:general} is that the resulting family \(\mathcal{P}(\mathcal{F})\) be decidable and satisfy the three axioms. (Finiteness of~\(\mathcal{F}\) in Corollary~\ref{cor:patterns} guarantees decidability automatically.) For instance, requiring the divisor graph of the chosen subset to be a forest---equivalently, forbidding all cycles as undirected subgraphs---satisfies the three axioms, so Theorem~\ref{thm:general} applies. This is a genuinely infinite family of forbidden subgraphs: for every \(k\ge 3\), taking \(k\)~distinct primes \(p_1,\dots,p_k\) and the products \(p_1 p_2,\, p_2 p_3,\,\dots,\, p_k p_1\) yields a set of~\(2k\) integers whose divisor graph is exactly~\(C_{2k}\) with no shorter cycle, so no finite subfamily suffices.

Specializing the corollary to the two-fork \(x\mid y\), \(x\mid z\) recovers the Erd\H{o}s problem:

\begin{corollary}\label{cor:erdos}
Let \(f(n)\) and \(q(n)\) denote the maximum size, and the number, of subsets of \(\{1,\dots,n\}\) containing no distinct \(x,y,z\) with \(x\mid y\) and \(x\mid z\). Then there exist effectively computable constants \(c_2\) and \(\beta_2\ge 1\) such that, for every \(\varepsilon>0\),
\begin{align*}
f(n)&=c_2\,n+O_{\varepsilon}\!\left(n\exp\!\bigl(-(1-\varepsilon)\sqrt{\log n\,\log\log n}\bigr)\right),\\
\log q(n)&=n\log\beta_2+O_{\varepsilon}\!\left(n\exp\!\bigl(-(1-\varepsilon)\sqrt{\log n\,\log\log n}\bigr)\right).
\end{align*}
In particular, \(\lim_{n\to\infty}q(n)^{1/n}=\beta_2\).
\end{corollary}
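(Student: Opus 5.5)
The plan is to deduce Corollary~\ref{cor:erdos} directly from Corollary~\ref{cor:patterns} with the single-member family \(\mathcal{F}=\{F\}\), where \(F\) is the directed two-fork on vertices \(x,y,z\) with arcs \(x\to y\) and \(x\to z\). Its underlying undirected graph is a path on three vertices, so \(F\) is connected. Hence \(\mathcal{F}\) is a finite family of connected forbidden subgraphs and Corollary~\ref{cor:patterns} applies, giving constants \(c_2:=c_{\mathcal{F}}\) and \(\beta_2:=\beta_{\mathcal{F}}\ge 1\) with exactly the stated error terms.

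The one point needing care is checking that \(\mathcal{F}\)-freeness of \(A\subseteq\{1,\dots,n\}\) coincides with the Erd\H{o}s condition. Containment is non-induced and directed. A copy of \(F\) in the oriented divisor graph of \(A\) is therefore an injective map \(x\mapsto a\), \(y\mapsto b\), \(z\mapsto c\) into \(A\) with \(a\to b\) and \(a\to c\) arcs. Since arcs join distinct vertices with \(u\mid v\), this means precisely that \(a,b,c\) are distinct elements of \(A\) with \(a\mid b\) and \(a\mid c\). Any further divisibility, such as \(b\mid c\), is irrelevant.

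So \(\mathcal{P}(\mathcal{F})\) is exactly the family of sets with no element dividing two others. It follows that \(f(n)=f_{\mathcal{F}}(n)\) and \(q(n)=q_{\mathcal{F}}(n)\), and the two displayed asymptotics are those of Corollary~\ref{cor:patterns}.

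For the final statement, divide the counting asymptotic by \(n\):
\[
\tfrac1n\log q(n)=\log\beta_2+O_\varepsilon\bigl(\exp(-(1-\varepsilon)\sqrt{\log n\log\log n})\bigr).
\]
The error term tends to \(0\), so \(q(n)^{1/n}\to\beta_2\). Effective computability is inherited from Corollary~\ref{cor:patterns}. It can also be seen directly, since membership of a finite set in \(\mathcal{P}(\mathcal{F})\) is decided by checking all triples.

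There is no real obstacle here. The only step requiring attention is the translation between ``no copy of the directed two-fork'' and ``no element divides two others''. In particular, one must use the orientation; the undirected version would also forbid common multiples. One must also observe that distinctness of \(a,b,c\) matches injectivity of the embedding.
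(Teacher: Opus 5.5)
Your proposal is correct and matches the paper's approach: the paper likewise obtains Corollary~\ref{cor:erdos} by applying Corollary~\ref{cor:patterns} to the single connected directed two-fork $x\to y$, $x\to z$. Your explicit check that non-induced directed containment with injective embedding is exactly ``no element divides two others,'' and your derivation of $q(n)^{1/n}\to\beta_2$ from the counting asymptotic, simply spell out what the paper leaves implicit.
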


The question of whether \(c_2\) is irrational, raised by Erd\H{o}s (see~\cite[Problem~B24]{Gu04}), remains open.

\begin{remark}[Forks, in-forks, and chains]\label{par:applications}\rm
Corollary~\ref{cor:patterns} applies equally to other connected forbidden subgraphs. For each \(r\ge 2\), the \(r\)-fork---forbidding any element from dividing \(r\) others---is connected, so the corollary gives computable density \(c_r\) and counting rate. The interval \((\lfloor n/(r+1)\rfloor,n]\) gives \(c_r\ge r/(r+1)\) and, since all of its subsets are admissible, \(\beta_r\ge 2^{r/(r+1)}\). For \(r\ge 3\), earlier bounds on \(c_r\) were obtained by Vijay~\cite{Vijay06} and Hegarty~\cite{Hegarty06}.

For each \(k\ge 2\), the directed path \(x_1\to x_2\to\cdots\to x_k\) on \(k\)~vertices is connected, so the corollary gives a computable density \(\gamma_k\) and counting rate \(\delta_k\) for \(k\)-chain-free subsets of \(\{1,\dots,n\}\). The interval \((\lfloor n/2^{k-1}\rfloor,n]\) is \(k\)-chain-free, giving \(\gamma_k\ge 1-2^{1-k}\) and \(\delta_k\ge 2^{1-2^{1-k}}\). For \(k=2\), \(\gamma_2=1/2\): a \(2\)-chain-free set is an antichain, and the \(\lceil n/2\rceil\) chains \(\{q,2q,4q,\dots\}\cap[1,n]\) for odd \(q\le n\) show this is tight.

Dually, the \(r\)-in-fork \(y_1\to x,\,\dots,\,y_r\to x\) forbids any element from being a multiple of \(r\) others. This is the problem dual to the \(r\)-fork, also posed by Erd\H{o}s~\cite[Problem~B24]{Gu04}. The in-fork is connected and directed, so Corollary~\ref{cor:patterns} gives a computable density \(c_r^*\) and counting rate for each \(r\ge 2\). The same interval \((\lfloor n/(r+1)\rfloor,n]\) is also \(r\)-in-fork-free, giving \(c_r^*\ge r/(r+1)\). Following the same techniques as in Section~\ref{sec:numerical}, we obtain \(0.7195\le c_2^*\le 0.788\) and \(1.82\le\beta_2^*\le 1.91\), confirming that the in-fork and two-fork problems have different extremal densities.
\end{remark}

\section{Proof of Theorem \ref{thm:general}}

We deduce Theorem~\ref{thm:general} from the following result of McNew. For integers \(1\le a\le n\), let \(D[a,n]\) be the divisor graph on the vertex set \(\{a,a+1,\dots,n\}\), with an edge between \(u\) and \(v\) when \(u\mid v\) or \(v\mid u\).

\begin{theorem}[McNew {\cite[Theorem~3 and footnote~1]{McNew}}]\label{thm:mcnew}
Fix \(\varepsilon>0\). Let \(h(a,n)\) be defined for \(1\le a\le n\), assume \(|h(a,n)|\le M\), and suppose that \(h(a,n)\) depends only on the rooted connected component of \(a\) in \(D[a,n]\), that is, on the connected component together with the distinguished vertex \(a\), up to linear scaling isomorphism.\footnote{McNew states the theorem for rooted graph isomorphism; the accompanying footnote observes that one may restrict the definition to linear isomorphisms without affecting the proof.}
Then there exists a constant \(c_h\) such that
\[
\sum_{a=1}^{n} h(a,n)=c_h\,n+O_{\varepsilon}\!\left(
M n\exp\!\bigl(-(1-\varepsilon)\sqrt{\log n\,\log\log n}\bigr)
\right).
\]
Moreover, if \(P^+(d)\) denotes the largest prime factor of \(d\), with the convention \(P^+(1)=1\), then
\[
c_h=
\sum_{i=1}^{\infty}
\left(\prod_{p\le i}\frac{p-1}{p}\right)
\sum_{\substack{d\ge 1\\ P^+(d)\le i}}
\sum_{t\in[id,(i+1)d)}
\frac{h(d,t)}{t(t+1)}.
\]
\end{theorem}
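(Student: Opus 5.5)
The plan is to group the integers $a\le n$ by a multiplicative decomposition under which the rooted component of $a$ in $D[a,n]$ is a dilate of a fixed small graph. Then $\sum_a h(a,n)$ becomes a weighted count of integers in a few arithmetic classes, and the weights can be evaluated by sieve estimates.

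\textbf{Step 1: rescaling the component.} Fix $a\le n$ and put $i:=\lfloor n/a\rfloor$. Factor $a=dm$, where $d$ is the $i$-smooth part of $a$ (so $P^+(d)\le i$) and every prime factor of $m$ exceeds $i$.

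Every vertex $b$ of $D[a,n]$ lies in $[a,n]$. So along any edge $u\mid v$ the ratio $v/u$ is at most $n/a<i+1$, and all its prime factors are $\le i$. Consequently every vertex of the connected component of $a$ has the same $i$-rough part $m$.

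Put $t:=\lfloor n/m\rfloor$. The condition $\lfloor n/a\rfloor=i$ is exactly $t\in[id,(i+1)d)$. I will check that $x\mapsto mx$ is a rooted linear-scaling isomorphism from the component of $d$ in $D[d,t]$ onto the component of $a$ in $D[a,n]$. The relevant vertices are $ex$-type numbers $mx$ with $x\le t$, and the rough part is preserved in both directions. Hence
\[
h(a,n)=h(d,t).
\]

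\textbf{Step 2: counting.} For fixed $i$, $d$, $t$, the admissible $a$ correspond to $i$-rough $m\in(n/(t+1),n/t]$. This interval has length $n/(t(t+1))$.

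Heuristically, the $i$-rough integers in it number about
\[
\prod_{p\le i}\frac{p-1}{p}\cdot\frac{n}{t(t+1)}.
\]
Summing $h(d,t)$ against these counts gives $c_h\,n$. The absolute convergence of the series for $c_h$ follows from $|h|\le M$ and the fact that the weights sum to $1$.

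\textbf{Step 3: error terms (the main obstacle).} Choose a cutoff $y$, tentatively $y=\exp\bigl(c\sqrt{\log n\log\log n}\bigr)$. There are three sources of error.

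First, the integers $a$ with $i>y$, i.e.\ $a<n/y$, contribute trivially at most $Mn/y$.

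Second, for $i\le y$, the sieve count of $i$-rough $m$ in an interval of length $L=n/(t(t+1))$ carries an error of size about $2^{\pi(i)}$. This error is negligible only when $L$ is large, i.e.\ when $t$, and hence $d\le t/i$, is not too large.

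Third, the large $d$ must be handled by the tail of the $i$-smooth numbers. Here the de Bruijn-type bound for $\Psi(x,y)$ shows that $y$-smooth $d$ above $n^{\delta}$ carry total weight $\sum 1/d$-mass of order $\exp\bigl(-(1+o(1))u\log u\bigr)$.

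Balancing the trivial loss $1/y$ against the smooth-number tail $u^{-u}$, with $u=\log n/\log y$, produces $\exp\bigl(-(1-\varepsilon)\sqrt{\log n\log\log n}\bigr)$. I expect this optimization, and the uniformity of the rough-number counts in $i\le y$, to be the delicate part. The rest of the argument is bookkeeping.
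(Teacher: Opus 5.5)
The paper does not prove this statement; it quotes it from McNew. Your Steps~1--2 are correct, and they are exactly the reduction behind McNew's formula for $c_h$:
\begin{itemize}
\item edge ratios in $D[a,n]$ are at most $n/a<i+1$, so the $i$-rough part $m$ is constant on the component;
\item since $t/d<i+1$, the component of $d$ in $D[d,t]$ consists of $i$-smooth numbers, and $x\mapsto mx$ is a rooted scaling isomorphism;
\item $\lfloor n/a\rfloor=i$ if and only if $t\in[id,(i+1)d)$.
\end{itemize}

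\textbf{The gap is Step~3.} That step carries all the analytic content of the theorem, and the tool you name there cannot work. Legendre's remainder $2^{\pi(i)}$ is not merely non-negligible for large $t$. Every $m$-interval has length $n/(t(t+1))<n$. So once $\pi(i)>\log_2 n$, i.e.\ for $i$ beyond a constant times $\log n\log\log n$, the remainder exceeds the count itself for every $t$. With Legendre you would have to take $y\approx\log n$, and the trivial loss $Mn/y$ is then already of order $Mn/\log n$.

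What you need is the fundamental lemma of the combinatorial sieve. It says an interval of length $\ell$ contains $\ell\prod_{p\le i}(1-1/p)\bigl(1+O(s^{-s/2})\bigr)+O(i^{s})$ integers free of primes $\le i$, where you may take $i^s$ a power of $\ell$ close to $\ell$. Because $t\ge id$, this only reaches $d$ up to about $n^{1/2-\eta}$. The $a$ with larger $i$-smooth part must be discarded separately.

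\textbf{Your optimization is also off.} Equating $\log y$ with $u\log u$ for $u=\log n/\log y$ gives $\log y\sim\frac{1}{\sqrt2}\sqrt{\log n\log\log n}$. That yields only $\exp\bigl(-(1/\sqrt2+o(1))\sqrt{\log n\log\log n}\bigr)$. Worse, the smooth-part threshold is forced down to about $n^{1/2}$, and with a single cutoff that gives exponent $1/2$.

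To reach $1-\varepsilon$ you must keep the mass $n/(i(i+1))$ of each level:
\begin{itemize}
\item The $a\in(n/(i+1),n/i]$ whose $i$-smooth part exceeds $Z=n^{1/2-\eta}$ have an $i$-smooth divisor in $[Z,iZ)$. So they number $\ll (n/i^2)\sum_{d\ge Z,\,P^+(d)\le i}1/d+iZ$.
\item Summed over $i\asymp e^{L}$, this is about $n\exp\bigl(-L-(1+o(1))u\log u\bigr)$ with $u=(\tfrac12-\eta)\log n/L$. The worst $L$ gives $\exp\bigl(-(\sqrt{1-2\eta}+o(1))\sqrt{\log n\log\log n}\bigr)$.
\item Similarly, the sieve's relative error at $t\approx n^{\tau}$ must be multiplied by the smooth-number mass of $d\approx n^{\tau}/i$. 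This is why an $s^{-s/2}$ saving is needed: a mere $e^{-s}$ saving on the bounded-$d$ range would only give an error of size $n\exp\bigl(-O(\sqrt{\log n})\bigr)$.
\item The trivial region is then cut at $y=\exp\bigl(\sqrt{\log n\log\log n}\bigr)$.
\end{itemize}
Without this level-by-level bookkeeping and a sieve with $s^{-s}$-type savings, your outline does not reach the stated error term.
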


The proof verifies McNew's hypotheses for suitable local functions and then recovers the stated asymptotics by telescoping.

\begin{proof}[Proof of Theorem~\ref{thm:general}]
\noindent\textit{Part~\textup{(a)}: extremal density.}
Write \(F_{\mathcal{P}}(a,n):=\Phi_{\mathcal{P}}(\{a,\dots,n\})\) and \(g_{\mathcal{P}}(a,n):=F_{\mathcal{P}}(a,n)-F_{\mathcal{P}}(a+1,n)\), with \(F_{\mathcal{P}}(n+1,n)=0\).

Since \(\{a+1,\dots,n\}\subseteq \{a,\dots,n\}\), we have \(F_{\mathcal{P}}(a,n)\ge F_{\mathcal{P}}(a+1,n)\). By downward closure, if \(B\subseteq \{a,\dots,n\}\) is admissible, then \(B\setminus\{a\}\subseteq \{a+1,\dots,n\}\) is admissible, so \(F_{\mathcal{P}}(a+1,n)\ge F_{\mathcal{P}}(a,n)-1\).
Hence \(0\le g_{\mathcal{P}}(a,n)\le 1\).

Next we show that \(g_{\mathcal{P}}(a,n)\) depends only on the rooted connected component of \(a\) in \(D[a,n]\). If \(T=T_1\sqcup T_2\) and there is no divisibility relation between an element of \(T_1\) and an element of \(T_2\), then the second axiom says that a subset \(B\subseteq T\) is admissible if and only if \(B\cap T_i\in \mathcal{P}\) for \(i=1,2\). Thus admissible subsets of \(T\) are exactly the unions \(B_1\sqcup B_2\) with \(B_i\in \mathcal{P}\), so \(\Phi_{\mathcal{P}}(T)=\Phi_{\mathcal{P}}(T_1)+\Phi_{\mathcal{P}}(T_2)\).
Now let \(C(a,n)\) be the connected component of \(a\) in \(D[a,n]\), and let
\[
R(a,n):=\{a,\dots,n\}\setminus C(a,n).
\]
Note that \(\{a,\dots,n\}=C(a,n)\sqcup R(a,n)\) and \(\{a+1,\dots,n\}=(C(a,n)\setminus\{a\})\sqcup R(a,n)\), with no divisor-graph edges between \(C(a,n)\) and \(R(a,n)\). Hence
\[
F_{\mathcal{P}}(a,n)=\Phi_{\mathcal{P}}(C(a,n))+\Phi_{\mathcal{P}}(R(a,n)),
\qquad
F_{\mathcal{P}}(a+1,n)=\Phi_{\mathcal{P}}(C(a,n)\setminus\{a\})+\Phi_{\mathcal{P}}(R(a,n)),
\]
and subtraction gives
\[
g_{\mathcal{P}}(a,n)
=
\Phi_{\mathcal{P}}(C(a,n))
-
\Phi_{\mathcal{P}}(C(a,n)\setminus\{a\}).
\]
Thus \(g_{\mathcal{P}}(a,n)\) depends only on the rooted connected component of \(a\) in \(D[a,n]\).

If \(C(a,n)\) and \(C(b,m)\) are linearly isomorphic, choose \(\lambda=u/v>0\) with \(u,v\in \N\) and \(u\,C(a,n)=v\,C(b,m)\). By scale invariance,
\[
\Phi_{\mathcal{P}}(C(a,n))
=
\Phi_{\mathcal{P}}(C(b,m)),
\qquad
\Phi_{\mathcal{P}}(C(a,n)\setminus\{a\})
=
\Phi_{\mathcal{P}}(C(b,m)\setminus\{b\}).
\]
Hence \(g_{\mathcal{P}}(a,n)=g_{\mathcal{P}}(b,m)\). Therefore \(g_{\mathcal{P}}\) satisfies Theorem~\ref{thm:mcnew} with \(M=1\), and so
\[
\sum_{a=1}^{n} g_{\mathcal{P}}(a,n)
=
c_{\mathcal{P}}\,n
+
O_{\varepsilon}\!\left(
n\exp\!\bigl(-(1-\varepsilon)\sqrt{\log n\,\log\log n}\bigr)
\right),
\]
Comparing McNew's formula for \(c_h\) with the definition of \(C_\Psi\) shows that \(c_{\mathcal{P}}=C_{\Phi_{\mathcal{P}}}\), as claimed. Telescoping gives
\[
f_{\mathcal{P}}(n)
=
F_{\mathcal{P}}(1,n)
=
\sum_{a=1}^{n}\bigl(F_{\mathcal{P}}(a,n)-F_{\mathcal{P}}(a+1,n)\bigr)
=
\sum_{a=1}^{n} g_{\mathcal{P}}(a,n),
\]
which yields the stated asymptotic.

It remains to show effective computability. Since \(0\le g_{\mathcal{P}}(d,t)\le 1\), every term in the series \(C_{\Phi_{\mathcal{P}}}\) is nonnegative. Write \(w_{i,d,t}:=\bigl(\prod_{p\le i}\tfrac{p-1}{p}\bigr)\tfrac{1}{t(t+1)}\) for the coefficient weight of the \((i,d,t)\) term, so that \(C_{\Phi_{\mathcal{P}}}=\sum w_{i,d,t}\,g_{\mathcal{P}}(d,t)\). By partial fractions and the Euler product for smooth numbers,
\begin{equation}\label{eq:weight}
\sum_{t\in[id,(i+1)d)} \frac{1}{t(t+1)}=\frac{1}{i(i+1)d}
\qquad\text{and}\qquad
\sum_{\substack{d\ge 1\\ P^+(d)\le i}} \frac{1}{d}=\prod_{p\le i}\!\left(1-\frac{1}{p}\right)^{-1}\!,
\end{equation}
so the total weight is \(\sum w_{i,d,t}=\sum_{i=1}^{\infty}1/(i(i+1))=1\). In particular, the series \(C_{\Phi_{\mathcal{P}}}\) converges absolutely. If membership in \(\mathcal{P}\) is decidable for finite sets, then \(g_{\mathcal{P}}(d,t)\) is computable by exhaustive search. Enumerate the triples \((i,d,t)\) in any computable order, and let
\[
S_N:=\sum_{n\le N} w_{i_n,d_n,t_n}\,g_{\mathcal{P}}(d_n,t_n),
\qquad
W_N:=\sum_{n\le N} w_{i_n,d_n,t_n}.
\]
Since all terms are nonnegative and \(\sum w_{i,d,t}=1\), we have
\[
0\le c_{\mathcal{P}}-S_N\le 1-W_N.
\]
Thus once \(1-W_N<\delta\), the partial sum \(S_N\) approximates \(c_{\mathcal{P}}\) to within~\(\delta\).

\medskip\noindent\textit{Part~\textup{(b)}: counting.}
The argument is the multiplicative analogue of part~(a). Write \(Q_{\mathcal{P}}(a,n):=Q_{\mathcal{P}}(\{a,\dots,n\})\), with \(Q_{\mathcal{P}}(n+1,n)=1\). If \(T=T_1\sqcup T_2\) with no divisibility between \(T_1\) and \(T_2\), the componentwise axiom says \(B\subseteq T\) is admissible if and only if \(B\cap T_i\) is admissible for \(i=1,2\), so admissible subsets of \(T\) are exactly the unions \(B_1\sqcup B_2\) with each \(B_i\subseteq T_i\) admissible. Hence \(Q_{\mathcal{P}}(T)=Q_{\mathcal{P}}(T_1)\,Q_{\mathcal{P}}(T_2)\). Define
\[
h_{\mathcal{P}}(a,n):=\log\frac{Q_{\mathcal{P}}(a,n)}{Q_{\mathcal{P}}(a+1,n)}.
\]
The multiplicative decomposition over \(C(a,n)\sqcup R(a,n)\) cancels \(Q_{\mathcal{P}}(R(a,n))\) in numerator and denominator, so \(h_{\mathcal{P}}(a,n)\) depends only on the rooted component of~\(a\). Every admissible subset of \(\{a+1,\dots,n\}\) is also an admissible subset of \(\{a,\dots,n\}\), so \(Q_{\mathcal{P}}(a,n)\ge Q_{\mathcal{P}}(a+1,n)\). Conversely, every admissible \(B\subseteq\{a,\dots,n\}\) either omits \(a\) or contains it, and removing \(a\) preserves admissibility by downward closure, so \(Q_{\mathcal{P}}(a,n)\le 2\,Q_{\mathcal{P}}(a+1,n)\). Hence \(0\le h_{\mathcal{P}}(a,n)\le\log 2\). Scale invariance preserves both counts, so \(h_{\mathcal{P}}\) is invariant under linear isomorphism of rooted components. Hence \(h_{\mathcal{P}}\) satisfies Theorem~\ref{thm:mcnew} with \(M=\log 2\). The ratios telescope: \(\log q_{\mathcal{P}}(n)=\sum_{a=1}^n h_{\mathcal{P}}(a,n)\), and McNew gives \(\sum h_{\mathcal{P}}(a,n)=C_{\log Q_{\mathcal{P}}}\,n+O_\varepsilon(\cdots)\), so \(\beta_{\mathcal{P}}=\exp(C_{\log Q_{\mathcal{P}}})\). Effective computability follows from the same weight argument as in part (a), with \(\log 2\) replacing~\(1\).
\end{proof}

\section{Connected forbidden subgraphs}\label{sec:patterns}

This section verifies the hypotheses of Theorem~\ref{thm:general} for forbidden-subgraph avoidance and proves Corollary~\ref{cor:patterns}.

Fix a finite family \(\mathcal{F}\) of connected forbidden subgraphs (directed or undirected). Let \(\mathcal{P}(\mathcal{F})\) be the family of finite sets \(B\subset \N\) whose divisor graph contains no copy of any member of~\(\mathcal{F}\).
\begin{proposition}\label{prop:patterns}
The family \(\mathcal{P}(\mathcal{F})\) satisfies the hypotheses of Theorem~\ref{thm:general}.
\end{proposition}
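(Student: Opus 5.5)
We verify that \(\emptyset\in\mathcal{P}(\mathcal{F})\) and then check the three axioms of Theorem~\ref{thm:general} in turn. The key preliminary observation is that the divisor graph of a subset \(B'\subseteq B\) is the induced subgraph of the divisor graph of \(B\) on \(B'\), with the same orientations. The empty set has an empty divisor graph, and every member of \(\mathcal{F}\) is a graph with at least one vertex, so it has no copy in the empty graph; hence \(\emptyset\in\mathcal{P}(\mathcal{F})\). For downward closure, observe that any copy of a member of \(\mathcal{F}\) in the divisor graph of \(B'\) is also a (directed, or underlying undirected) subgraph of the divisor graph of \(B\). So if \(B'\) is not \(\mathcal{F}\)-free, neither is \(B\), and the contrapositive gives axiom~(1).

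For dilation invariance, the map \(b\mapsto mb\) is a bijection \(B\to mB\), and \(u\mid v\) if and only if \(mu\mid mv\). It is therefore an isomorphism of oriented divisor graphs, so copies of members of \(\mathcal{F}\) correspond bijectively and axiom~(3) follows. The finiteness of \(\mathcal{F}\), together with the finiteness of each member, also yields decidability: one checks each finite pattern against all injective vertex maps into \(B\).

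The main step is axiom~(2), and this is where connectedness enters. Suppose \(T=T_1\sqcup T_2\) with no divisibility between \(T_1\) and \(T_2\), and let \(B\subseteq T\). If \(B\in\mathcal{P}(\mathcal{F})\), then each \(B\cap T_i\in\mathcal{P}(\mathcal{F})\) by downward closure. Conversely, suppose \(B\) contains a copy of some \(G\in\mathcal{F}\). The image of this copy is a connected subgraph of the underlying undirected divisor graph of \(B\). This holds because every edge of \(G\) maps to an edge (arc) of the divisor graph, and \(G\) is connected as an undirected graph. Since the divisor graph of \(B\) has no edges between \(B\cap T_1\) and \(B\cap T_2\), a connected image cannot meet both parts; otherwise some path inside the copy would cross between them along an edge. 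Hence the copy lies entirely in some \(B\cap T_i\). It is then a copy of \(G\) in the divisor graph of \(B\cap T_i\), because that graph is the induced subgraph and the copy's edges all lie within \(B\cap T_i\). So \(B\cap T_i\notin\mathcal{P}(\mathcal{F})\), which establishes the equivalence.

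The only delicate points are to keep "copy" meaning non-induced containment and to handle directed and undirected members uniformly through the underlying graph. The same argument also covers the induced variant mentioned in the footnote.
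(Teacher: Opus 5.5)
Your proof is correct and follows essentially the same route as the paper. Downward closure and dilation invariance hold because restricting to a subset and multiplying by \(m\) preserve and reflect divisibility, and the componentwise axiom holds because the vertex set of any copy of a connected pattern is connected in the undirected divisor graph, so it lies inside a single \(T_i\); your extra remarks on decidability (which the paper handles in the proof of Corollary~\ref{cor:patterns}) and on the induced variant are also correct.
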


\begin{proof}
Clearly \(\emptyset\in \mathcal{P}(\mathcal{F})\). The downward-closed axiom is immediate: passing to a subset cannot create a forbidden subgraph. For scale invariance, note that the axiom only asks about integer dilations. If \(m\in\N\), then multiplication by \(m\) preserves divisibility, so a finite set \(B\) contains a forbidden subgraph if and only if \(mB\) does. Hence \(B\in\mathcal{P}(\mathcal{F})\) if and only if \(mB\in\mathcal{P}(\mathcal{F})\).

For the componentwise axiom, let \(T=T_1\sqcup T_2\) with no divisibility relation between \(T_1\) and \(T_2\). If \(B\subseteq T\) contains a copy of some forbidden subgraph in \(\mathcal{F}\), let \(S\subseteq B\) be the set of vertices of that copy. Since the forbidden subgraph is connected after forgetting directions, its image on \(S\) is connected in the undirected divisor graph, so \(S\) lies entirely in \(T_1\) or entirely in \(T_2\). Therefore \(B\in\mathcal{P}(\mathcal{F})\) if and only if both \(B\cap T_1\) and \(B\cap T_2\) lie in \(\mathcal{P}(\mathcal{F})\).
\end{proof}

\begin{proof}[Proof of Corollary~\ref{cor:patterns}]
Apply Theorem~\ref{thm:general} with \(\mathcal{P}=\mathcal{P}(\mathcal{F})\), using Proposition~\ref{prop:patterns}. Effective computability follows because \(\mathcal{F}\) is finite, so membership in \(\mathcal{P}(\mathcal{F})\) is decidable for finite sets.
\end{proof}

\section{Numerical estimates}\label{sec:numerical}

Both constants \(c_2\) and \(\beta_2\) are computed from the same series \(C_\Psi\), with \(\Psi=\Phi\) for the density and \(\Psi=\log Q\) for the counting rate. Write \(g(d,t):=\Psi(\{d,\dots,t\})-\Psi(\{d+1,\dots,t\})\) for the local increment. In both cases \(g(d,t)\ge 0\), so every term in the series is nonnegative and any finite partial sum is a lower bound on~\(C_\Psi\). The coefficient weights \(w_{i,d,t}:=\bigl(\prod_{p\le i}\tfrac{p-1}{p}\bigr)\tfrac{1}{t(t+1)}\) sum to~\(1\) (see~\eqref{eq:weight}), and the maximum contribution of a fixed \((i,d)\) block is
\[
w_{i,d}\;:=\;\sum_{t\in[id,(i+1)d)} w_{i,d,t}\;=\;\frac{1}{i(i+1)d}\prod_{p\le i}\frac{p-1}{p},
\]
so small \(d\) and especially small \(i\) carry most of the mass. We truncate to triples satisfying \(d\,i^{\alpha}\le B\) for parameters \(\alpha\ge 1\) and \(B\ge 1\). Let \(S\) be the resulting partial sum and \(W=\sum w_{i,d,t}\) the retained coefficient mass. Since \(g(d,t)\le M\) for a known bound~\(M\) (with \(M=1\) for \(c_2\) and \(M=\log 2\) for \(\beta_2\)), the omitted tail is at most \(M(1-W)\), giving
\[
S\;\le\; C_\Psi\;\le\; S + M(1-W).
\]
Both a lower and an upper bound thus follow from a single truncation. Each local increment \(g(d,t)\) is computed exactly for each rooted-component block: for \(c_2\), this requires solving a finite \(0\text{-}1\) optimization; for \(\beta_2\), it requires counting all admissible subsets by dynamic programming.

\subsection{A numerical estimate for \(c_2\)}

For the density, \(g_\Phi(d,t):=\Phi(\{d,\dots,t\})-\Phi(\{d+1,\dots,t\})\) with \(M=1\). We evaluated \(g_\Phi(d,t)\) exactly using the CP-SAT solver in Google OR-Tools. With \(\alpha=10\) and \(B=10^{13}\), the truncation retains \(32660\) rooted-component blocks and gives
\[
c_2\;\ge\; 0.6729.
\]
Combined with Lebensold's upper bound \(c_2\le 0.6736\) from \cite{Le76}, this leaves a gap of approximately \(7\times 10^{-4}\). An analogous upper bound from our series would require the omitted tail to be smaller than Lebensold's bound, but the tail decays as \(1/(I+1)\) for an \(i\le I\) truncation, so reaching this level would require evaluating far more blocks.

\subsection{A numerical estimate for \(\beta_2\)}\label{sec:beta2-numerical}

For the counting rate, \(h_Q(d,t):=\log Q(\{d,\dots,t\})-\log Q(\{d+1,\dots,t\})\) with \(M=\log 2\). We evaluated \(h_Q(d,t)\) exactly by enumerating all subsets of each rooted-component block via dynamic programming, counting how many are two-fork-free for both the block and its root-deleted version. With \(\alpha=10\) and \(B=10^{10}\), the truncation retains \(1143\) blocks. The resulting bounds are
\[
1.729\ldots\le\beta_2\le 1.874\ldots.
\]

\subsection*{Acknowledgments}
The initial version of Corollary~\ref{cor:erdos}, covering only the two-fork (divisor-of-two) case, was proved by ChatGPT~5.4~Pro. The general framework (Theorem~\ref{thm:general} and Corollary~\ref{cor:patterns}) was proposed by the author.

\bibliographystyle{amsalpha}
\bibliography{references}

\end{document}